\documentclass[10pt,a4paper,twoside]{article}
\usepackage{amsfonts,amssymb,amsmath,amsthm,graphics,titlesec,fancyhdr}

\textwidth135mm
\textheight210mm
\oddsidemargin12mm
\marginparwidth20mm
\evensidemargin12mm
\setlength{\unitlength}{1mm}

\pagestyle{fancy}

\fancyhf{}
\fancyhead[CO]{\sc\small Conservation of energy and momenta in affine nonholonomic systems}
\fancyhead[CE]{\sc\small Francesco Fass\`o and Nicola Sansonetto } 
\fancyhead[RO,LE]{{\footnotesize \thepage}}

\renewcommand\thesection{\arabic{section}.}
\renewcommand\thesubsection{\thesection\arabic{subsection}.}

\titleformat{\section}
  {\normalfont\large\bfseries}{\thesection}{0.5em}{}
\newcommand{\addperiod}[1]{#1.}
\titleformat{\subsection}[runin]
  {\normalfont\bfseries}{\thesubsection}{0.25em}{\addperiod}

\theoremstyle{plain}
\newtheorem*{definition}{\bf Definition}
\newtheorem*{remark}{\it Remark}
\newtheorem{proposition}{\bf Proposition}

\newcommand{\der}[2]{\frac{\partial#1}{\partial#2}}
\newcommand{\dder}[3]{\frac{\partial^2#1}{\partial#2 \partial#3} }
\newcommand{\cC}{\mathcal{C}}
\newcommand{\cD}{\mathcal{D}}
\newcommand{\cM}{\mathcal{M}}
\newcommand{\cR}{\mathcal{R}}
\newcommand{\bR}[1]{\mathbb{R}^{#1}}
\newcommand{\s}{\sigma}
\newcommand\mf[1]{\hat{#1}}  


\begin{document}

\title{\Large \bf 
Conservation of energy and momenta \\
in nonholonomic systems with affine constraints\footnote{
This work is part of the research projects {\it
Symmetries and integrability of nonholonomic
mechanical systems} of the University of Padova and PRIN {\it Teorie
geometriche e analitiche dei sistemi Hamiltoniani in dimensioni finite
e infinite}.}
}

\author{\sc Francesco Fass\`o\footnote{\footnotesize Universit\`a di Padova,
Dipartimento di Matematica, Via Trieste 63, 35121 Padova, Italy.
Email: {\tt fasso@math.unipd.it} } 
\  and
Nicola Sansonetto\footnote{\footnotesize Universit\`a di Padova,
Dipartimento di Matematica, Via Trieste 63, 35121 Padova, Italy.
Email: {\tt sanson@math.unipd.it} }\ \footnote
{Supported by the Research Project
{\it Symmetries and integrability of nonholonomic
mechanical systems} of the University of Padova.}
}

\date{}
\maketitle
\centerline{\small (\today)}

{\small 
\begin{abstract}
\noindent
We characterize the conditions for the conservation of
the energy and of the components of the momentum maps of lifted actions, and
of their `gauge-like' generalizations, in
time-independent nonholonomic mechanical systems with affine
constraints. These conditions involve geometrical and mechanical
properties of the system, and are codified in the  so-called
reaction-annihilator  distribution. 

\vskip3mm
{\scriptsize
\noindent
{\bf Keywords:} Nonholonomic mechanical systems, Conservation of
energy, Reaction-annihilator \break
distribution, Gauge momenta, Nonholonomic Noether theorem. 

\vskip1mm
\noindent
{\bf MSC:} 70F25, 37J60, 37J15, 70E18
}

\end{abstract}
}

\section{Introduction}

In this paper we study the conservation of the energy and of the
components of the momentum maps of lifted actions---and of their
`gauge' generalizations introduced in \cite{BGM}---in time-independent
mechanical
systems with ideal nonholonomic  constraints that are affine
functions of the velocities. The conservation of both types of
functions is affected by the reaction forces exerted by the
nonholonomic constraint,  but there is a difference between them. The
conservation of energy depends on certain properties of the
nonhomogeneous term of the constraint distribution, and thus differs
from the case of nonholonomic systems with constraints that are
linear functions of the velocities. Instead, the conservation of
momenta and gauge momenta is essentially the same as that in the case
of linear constraints, which has been extensively studied
\cite{bates-sniatycki,marle95,FRS,FGS2008}. Therefore, we will
mainly focus on the conservation of energy. (If the Lagrangian is not
quadratic kinetic energy minus positional potential energy, then the
function we call energy is, properly, the Jacobi integral).

It is well known that the energy is conserved in nonholonomic
mechanical systems with constraints that are linear in the velocities
\cite{pars, NF}, and that it is not always conserved if the
constraints are affine, or more generally nonlinear, in the
velocities. More than specifically in the case of affine constraints,
the issue of energy conservation has received extensive consideration
in the case of general nonlinear constraints \cite{BKMM, spagnoli,
sniatycki98, marle2003,
kobayashi-oliva2003,BM2008,bates-jim,zampieri}. The equation of
balance of the energy shows that the energy is conserved if and only
if the reaction forces exerted by the constraint do not do any work
on the constrained motions. The quoted references deduce from this
some sufficient conditions for the conservation of energy, such as
the tangency of the Liouville vector field to the constraint manifold
(as e.g. in \cite{marle2003,spagnoli}) or the fact that the
constraint is a homogeneous function of the velocities (as e.g. in
\cite{BM2008,bates-jim}). However, when particularized to the case of
affine constraints, as e.g. in  \cite{marle2003,
kobayashi-oliva2003}, all these sufficient conditions reduce to the
linearity of the constraints. It seems, therefore, that conservation
of energy for nonholonomic  systems with affine constraints is
presently not understood. The main purpose of this paper is to remedy
this lacune, by identifying the properties of a nonholonomic
mechanical system  with affine constraints that determine the
conservation of its energy. 

At the basis of our approach is the fact that, under the hypothesis of
ideality of the constraints (namely, d'Alembert principle), the
reaction force that the constraint exerts on the system is a known
function of the kinematic state of the system. This function depends
on properties of the system that are both of geometric nature (the
nonholonomic constraint) and of mechanical nature (the mass
distribution and the active forces\footnote{By `active forces' we
mean the forces that act on the system and are not reaction forces;
in part of the literature they are called `external forces'.}). The
inspection of this function reveals that, for a given system, the set
of all reaction forces exerted by the constraints on the constrained
motions might be (and typically is) {\it smaller} than the set of all
reaction forces that satisfy the condition of ideality. The reason is
that for a given system the active forces that act on the system are
fixed, while the notion of ideality makes reference to all possible
active forces that might possibly act on the system (see \cite{FS2009}
for a discussion of this fact). Consequently, any vector that
annihilates the linear part of the constraint may be an ideal
reaction force, but for a given system, the class of reaction forces
actually exerted by the constraint may be a subset of this
annihilator.

Therefore, the properties of a nonholonomic system that are
influenced by the reaction forces exerted on constrained motions may
depend, in a complicate way, on the geometric and mechanical
properties of the system, including the active forces. These
properties can be codified by a distribution on the configuration
manifold, which is called the {\it reaction-annihilator
distribution}. This distribution was introduced in \cite{FRS} in
connection with the conservation of the momentum map of lifted
actions in nonholonomic systems with linear constraints, and was
further used in \cite{FGS2008,FGS2009,FS2009, crampin,FGS2012,jotz2}. 

We will show that a necessary and sufficient condition for energy
conservation in nonholonomic mechanical systems with affine
constraints is that the nonhomogeneous term of the constraint is a
section of the reaction-annihilator distribution. This clarifies in a
quantitative, computable way why energy conservation is not a
property of purely geometric type and, in particular, how it depends
on the active forces that act on the system: changing just the
(conservative) active forces that act on a given system (same
constraints, same kinetic energy) may destroy, or restore, the
conservation of energy. We will illustrate these behaviours on some
examples.

We recall the basic facts about nonholonomic mechanical systems with
affine constraints, and introduce the reaction-annihilator
distribution for these systems, in Section~2. Energy conservation is
studied in Section~3 and the conservation of momenta and gauge
momenta of lifted actions is concisely studied in Section 4. A short
Conclusion follows, where we stress the importance of exploiting the
knowledge of the reaction forces in the study of nonholonomic
systems. In the Appendix we derive the expression of the reaction
force as function of the kinematic state. 

Throughout the paper all manifolds and maps are smooth and all vector
fields are assumed to be complete. For simplicity we restrict our
consideration to time-independent systems. For introductions to nonholonomic
mechanics see e.g. \hbox{\cite{pars,NF, pagani91, cortes,
marle2003,benenti,CDS}}. 

Lastly, we mention that in certain nonholonomic mechanical systems
with affine constraints,
even if the energy is not conserved, there may exist
a modification of it (which
may be interpreted as the energy of the system in a moving reference
frame and has therefore been called a `moving energy') that is
conserved \cite{FS2015}.

\section{Affine constraints and the reaction-annihilator distribution}

\subsection{Nonholonomic systems with ideal affine constraints}
Since affine constraints appear typically
in problems of rigid bodies that roll on moving surfaces, it is
appropriate to work on a phase space that is a manifold. However, for
simplicity we will resort wherever possible to a coordinate
description.\footnote{In the sequel, symbols with a hat denote global
objects and the same symbols without the hat their local,
coordinate representatives.} Moreover, because of the possible
presence of moving holonomic constraints in this type of systems, it
is natural to allow for the presence of gyrostatic terms in the
Lagrangian, that may come either from the use of non-inertial frames
or from the use of moving coordinates. We assume however that the
system is time-independent, as is it typically happens if the bodies and the
surface have suitable symmetries and the latter moves at uniform
speed. 

Our starting point is thus a Lagrangian system with $n$-dimensional
configuration manifold $\mf{Q}$ and Lagrangian
$\mf{L}:T\mf{Q}\to\bR{}$, that describes a mechanical system
subject to ideal holonomic constraints. We assume that the
Lagrangian has the mechanical form 
\begin{equation}\label{hatL}
  \mf{L}=\mf{T}-\mf{b} - \mf{V}\circ\pi
\end{equation}
where $\mf{T}$ is a positive definite quadratic form on $T\mf{Q}$,
$\mf{b}$ is a 1-form on $\mf{Q}$ regarded as a function on $T\mf{Q}$,
$\mf V$ a function on $Q$ and $\pi: T\mf{Q} \to \mf{Q}$ is the tangent
bundle projection.  Following e.g. \cite{marle2003} we write Lagrange
equations as  $[\mf L]=0$, where $[\mf L]$ may be regarded as a
1-form on $\mf Q$, whose coordinate expression is the well known
$\frac d{dt}\der L {\dot q} -\der L q$.

We add now the nonholonomic constraint that, at each point $\mf{q}\in
\mf{Q}$, the velocities of the system belong to an affine subspace
$\mf{\cM}_{\mf{q}}$ of the tangent space $T_{\mf q}\mf{Q}$.
Specifically, we assume that there are a nonintegrable distribution
$\mf{\cD}$ on $\mf{Q}$ of constant rank $r$, with $1<r<n$, and a
vector field $\mf{\xi}$ on $\mf{Q}$ such that, at each point
$\mf{q}\in \mf{Q}$,
$$
  \mf{\cM}_{\mf q}=\mf{\xi}(\mf{q})+\mf{\cD}_{\mf q} \,.
$$
Clearly, the vector field $\mf{\xi}$ is defined up to a section
of~$\mf{\cD}$. The affine distribution $\mf{\cM}$ with fibers
$\mf{\cM}_{\mf q}$ may also be regarded as a submanifold
$\mf{M}\subset T\mf{Q}$ of dimension $n+r$, which is actually an
affine subbundle of $T\mf{Q}$ of rank $r$, and is called the {\it
constraint manifold}. The case of linear constraints is recovered
when the vector field $\mf{\xi}$ is a section of the distribution
$\mf{\cD}$, since then $\mf{\cM}=\mf{\cD}$.

We assume that the nonholonomic constraint is `ideal', namely, that it
satisfies d'Alembert principle. This means that, when the system is
in a configuration $\mf{q}\in \mf{Q}$, the set of reaction forces
that the nonholonomic constraint is capable of exerting  coincides
with the annihilator $\mf{\cD}_{\mf q}^\circ$ of $\mf{\cD}_{\mf q}$ 
(see e.g. \cite{pagani91,marle2003}). Under this hypothesis there is
a unique function $\mf{R}_{\mf{L},\mf{M}}:\mf{M}\to\mf{\cD}^\circ$,
namely a function that associates an ideal reaction force $\mf
R_{\mf L,\mf M}$ to each constrained kinematic state
$\mf{v}_{\mf q}\in \mf{M}$, which has the property that the restriction to
$\mf{M}$ of Lagrange equations with the reaction forces,
\begin{equation}\label{EqLagrWithRF}
  [\mf{L}]\big|_{\mf M}  = \mf{R}_{\mf{L},\mf{M}} \,,
\end{equation}
defines a dynamical system on $\mf M$ (that is, a vector field on
$\mf{M}$). For completeness, we give a proof of this fact in the
Appendix.

\begin{definition} Assume that $\mf L:T\mf Q\to\bR{}$ is as in
(\ref{hatL}) and that $\mf M$ is an affine subbundle of $T\mf Q$.
The {\rm nonholonomic mechanical system with affine
constraints} $(\mf{L},\mf{Q},\mf{M})$ is the
dynamical system defined by equation (\ref{EqLagrWithRF})
on $\mf{M}$. 
\end{definition}

\subsection{Coordinate description}

We consider now a system of local coordinates $q$ on $\mf{Q}$, with
domain $Q\subseteq\bR n$, and lift them to bundle coordinates
$(q,\dot q)\in Q\times\bR n$ in $T\mf{Q}$. We write the local
representative of the Lagrangian
$\mf{L}=\mf{T}+\mf{b}-\mf{V}\circ\pi$ as
\begin{equation}\label{L}
  L(q,\dot q) = \frac12\dot q\cdot
  A(q)\dot q+b(q)\cdot q - V(q)
\end{equation}
with $A(q)$ an $n\times n$
symmetric nonsingular matrix and $b(q)\in\bR n$

The fibers of the local representative $\cD$ of the distribution
$\mf{\cD}$ can be described as the kernel of a $q$-dependent $k\times
n$ matrix $S(q)$ that has everywhere rank $k$, with $k=n-r$: 
$$
   \cD_q=\{\dot q\in T_qQ =\bR n\,:\; S(q)\dot q=0\} \,.
$$
The matrix $S$ is not uniquely defined. However, if $S_1$ and $S_2$
are any two possible choices of it, then from $\ker S_1 = \ker S_2$
it follows that there exists a $q$-dependent $k\times k$ nonsingular
matrix $P(q)$ such that $S_2 = P S_1$.

Let now $\cM$ be the local representative of $\mf{\cM}$, $M\subset
Q\times\bR n$ that of $\mf M$ and $\xi:Q\to\bR n$ that of $\mf{\xi}$.
Then $\dot q\in \cM_q$ if and only if $\dot q= \xi(q)+u$ for some
$u\in\ker S(q)$, that is, if and only if  $S(q)[\dot q-\xi(q)]=0$.
Thus
$$
  M = \big\{ (q,\dot q)\in Q\times \bR n \,:\, S(q)\dot q + s(q) = 0 
  \big\}
$$
with
$$
   s(q) = -S(q) \xi(q) \in \bR{k} \,.
$$
Note that $s$ is independent of the arbitrariness in the choice of the
component of $\mf{\xi}$ along $\mf{\cD}$, that is of the component of
$\xi$ along $\ker S$. However, $s$ depends on the choice of the matrix
$S$: if $s_1$ and $s_2$ are relative to two matrices
$S_1$ and $S_2=P S_1$, then $s_2 =P s_1$. 

In coordinates, the equations of motion (\ref{EqLagrWithRF})
of the nonholonomic mechanical system $(\mf{L},\mf{Q},\mf{M})$ are
\begin{equation}\label{eq:LagrEq}
  \Big( \frac{d}{dt}\frac{ \partial L }{\partial \dot q} 
  - 
  \frac{\partial L}{\partial q}\Big) \Big|_M \, = \, R_{L,M}
\end{equation}
where $R_{L,M}(q,\dot q)$ is the local representative of
$\mf{R}_{\mf{L},\mf{M}}$. As shown in the Appendix, $R_{L,M}$ equals
the restriction to $M$ of the function
\begin{equation}\label{eq:RF1}
  S^T(SA^{-1}S^T)^{-1} ( SA^{-1} \ell - \s )
\end{equation}
where $\ell\in\bR n$ and $\s\in\bR k$ have components\footnote{We use
everywhere the convention of summation over repeated indexes.}
\begin{equation}\label{eq:RF2}
  \ell_i = \dder L {\dot q_i}{q_j}\dot q_j 
  - \der L {q_i}
  \,,\qquad 
  \sigma_a = 
  \der{S_{ai}}{q_j} \dot q_i\dot q_j + 
  \der{s_{a}}{q_j} \dot q_j 
\end{equation}
with $i,j,h=1,\ldots,n$ and $a = 1,\ldots, k$. Note that
$$
   \ell=\alpha+\beta+V'
$$
with
$
\alpha_i \, = \,
     \left( \frac{\partial A_{ij}}{\partial q_h}
           -\frac12 \frac{\partial A_{jh}}{\partial q_i}\right)
     \dot q_j\dot q_h
$,
$
\beta_i=\Big( \frac{\partial b_j}{\partial q_i}
                -\frac{\partial b_i}{\partial q_j} \Big)\dot q_j
$ and
$V'_i=\der V{q_i}$. 

In the case of linear constraints, expression (\ref{eq:RF1}) or
analogue expressions are given in \cite{ago,FRS,benenti}; in
the Appendix, besides considering the affine case, we complement these
treatments with a global perspective.

\begin{remark}\rm 
The restriction to $M$ of the function (\ref{eq:RF1}) is independent
of the choice of $S$ and $s$, that is, under the replacement of $S,s$
by $P S,P s$. (This change produces an extra term in $\sigma$, which
however vanishes on $M$).
\end{remark}

\subsection{The reaction-annihilator distribution}
While the condition of ideality assumes that, at each point $\mf
q\in\mf Q$,  the constraint can---a priori---exert all reaction
forces that lie in $\mf{\cD}_{\mf q}^\circ$, expression
(\ref{eq:RF1}) shows that, ordinarily, only a subset of these
possible reaction forces is actually exerted in the motions of the
system. In fact, in coordinates,  $\mf{\cD}_{\mf q}^\circ$ is the
orthogonal complement to $\ker S(q)$, namely the range of $S(q)^T$,
and the map
$$
   S^T(SA^{-1}S^T)^{-1} (SA^{-1} \ell - \s ) 
   \big|_{M_q} :M_q\to\mathrm{range} S(q)^T
$$
might not be surjective. 
Specifically, the reaction forces that the constraint exerts, when the
system $(\mf{L},\mf{Q},\mf{M})$ is in a configuration $\mf{q}\in
\mf{Q}$ with any possible
velocity $\mf{v}_{\mf q} \in \mf{\cM}_{\mf q}$, are the elements of the set
\[
  \mf{\cR}_{\mf q}:= 
  \bigcup_{\mf{v}\in \mf{\cM}_{\mf q}}
  \mf{R}_{\mf{L},\mf{M}}(\mf{v}_{\mf q})
\]
and this set may be (and typically is) a proper subset of
$\mf{\cD}_{\mf q}^\circ$.  This point of view was taken in \cite{FRS}
and leads to the following

\begin{definition}
The {\rm reaction-annihilator distribution} $\mf{\cR}^\circ$ of a
nonholonomic mechanical system with affine constraints
$(\mf{L},\mf{Q},\mf{M})$  is the (possibly non-smooth and of
non-constant rank) distribution on $\mf{Q}$ whose fiber
$\mf{\cR}^\circ_{\mf q}$ at $\mf{q}\in \mf{Q}$ is the annihilator of
$\mf{\cR}_{\mf q}$. 
\end{definition}

The interest of this distribution is not that much geometric, but
mechanical: a vector field $\mf{Z}$ on $\mf{Q}$ is a section of
$\mf{\cR}^\circ$ if and only if, {\it in all constrained motions} of
the system, the reaction force does zero work on it: $\langle \mf{R}_{
\mf{L},\mf{M}}  (\mf v_{\mf q}) ,\mf Z(\mf q) \rangle =0$ for all
$\mf{v}\in \mf{M}$. This is a system-dependent condition, which is
weaker than being a section of $\mf{\cD}$ because
$$
  \mf{\cD}_{\mf q} \subseteq \mf{\cR}_{\mf q}^\circ
  \qquad \forall \, \mf{q} \in \mf{Q} 
  \,.
$$

Expression (\ref{eq:RF1}) of the reaction forces shows that
$\mf{\cR}^\circ$ is computable a priori, without knowing
the motions of the system. Moreover, this expression shows how
$\mf{\cR}^\circ$ depends on the geometry of the nonholonomic constraints
(through the matrix $S$ and the vector~$s$) and on the mass distribution
of the system and on the active forces that act on it (through the
Lagrangian).\footnote{$\mf{\cR}^\circ$ depends of course also on the
holonomic constraint, through the dependence of $L$ on $(q,\dot q)$.}

Examples of reaction-annihilator distributions $\mf{\cR}^\circ$, that show
that their fibers may actually be larger than those of $\mf{\cD}$,
are given in \cite{FRS,FGS2008} for systems with linear constraints
and in sections 3.2 and 4.3 below for the case of affine
constraints. For  a discussion of the relation between $\mf{\cR}^\circ$ 
and d'Alembert principle see \cite{FS2009}. 

\section{Conservation of energy in systems with affine constraints}

\subsection{Characterization of the conditions for energy conservation}
We recall that the {\it energy}, or more exactly the {\it Jacobi
integral}, of a Lagrangian $\mf{L}$ is the function
$$
  \mf{E}_{\mf L}(\mf{v}) := \langle \mf{p}_{\mf L}, \mf{v} \rangle 
  - 
  \mf{L}(\mf v)
$$
where $\mf{p}_{\mf L}$ is the momentum 1-form relative
to $\mf{L}$, namely $\mf{p}_{\mf L} =\mathbf {F} L$
with $\mathbf F$ the fiber derivative (as defined, e.g., in
\cite{abraham-marsden}). In coordinates, $p_L=\der L{\dot q}$ and, if
$L$ is as in (\ref{L}), $p_L(q,\dot q)=A(q)\dot q - b(q)$. 

If $\mf L$  is as in (\ref{hatL}) then  $\mf{E}_L =
\mf{T}+\mf{V}\circ\pi$. The function $\mf{E}_{\mf L}$ can be
properly interpreted as the mechanical energy of the system only if
$\mf{L}=\mf{T}-\mf{V}\circ\pi$ but, as is customary, we will call it
energy in all cases. For a Lagrangian system, $\mf{E}_{\mf L}$ is a
first integral if and only if, as we do assume here, $\mf{L}$ is
independent of time.

\begin{definition}
{\it The {\rm energy} $\mf{E}_{\mf{L},\mf{M}}$ of the nonholonomic
mechanical system with affine constraints $(\mf{L},\mf{Q},\mf{M})$ 
is the restriction of $\mf{E}_{\mf L}$ to the constraint manifold
$\mf{M}$.
}\end{definition}

\begin{proposition}\label{thm1}
For a nonholonomic mechanical system with affine constraints 
$(\mf{L},\mf{Q},\mf{M})$ with constraint distribution
$\mf{\cD}+\mf{\xi}$, the energy $\mf{E}_{\mf{L},\mf{M}}$ is a first
integral if and only if $\mf{\xi}$ is a section of $\mf{\cR}^\circ$.
\end{proposition}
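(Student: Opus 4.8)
The plan is to derive the result from the balance equation for the energy along the constrained motions. First I would recall the elementary fact for a time-independent Lagrangian system subject to an external force: if a motion satisfies $[\mf{L}]=F$, then differentiating $\mf{E}_{\mf L}=\langle \mf{p}_{\mf L},\cdot\,\rangle-\mf{L}$ along it and using that the terms in $\ddot q$ cancel (because $\mf{p}_{\mf L}=\mathbf{F}\mf{L}$) yields $\frac{d}{dt}\mf{E}_{\mf L}=\langle F,\dot{\mf q}\rangle$. In coordinates this is just the identity $\frac{d}{dt}E_L=\big(\frac{d}{dt}\frac{\partial L}{\partial\dot q}-\frac{\partial L}{\partial q}\big)\cdot\dot q$, which follows since $p_L=\frac{\partial L}{\partial\dot q}$.

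Next I would specialize this to the constrained system. Along every motion of $(\mf{L},\mf{Q},\mf{M})$ the equation of motion (\ref{EqLagrWithRF}) gives $[\mf{L}]\big|_{\mf M}=\mf{R}_{\mf{L},\mf{M}}$, while the velocity $\dot{\mf q}$ is the kinematic state $\mf{v}_{\mf q}\in\mf{M}$. Hence the balance equation becomes
$$\frac{d}{dt}\mf{E}_{\mf{L},\mf{M}}=\langle \mf{R}_{\mf{L},\mf{M}}(\mf{v}_{\mf q}),\mf{v}_{\mf q}\rangle\,,$$
so $\mf{E}_{\mf{L},\mf{M}}$ is conserved precisely when the reaction force does zero work on the constrained velocity in every motion.

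The key simplification is to decompose the kinematic state as $\mf{v}_{\mf q}=\mf\xi(\mf q)+\mf{u}$ with $\mf{u}\in\mf{\cD}_{\mf q}$. Since $\mf{R}_{\mf{L},\mf{M}}(\mf{v}_{\mf q})\in\mf{\cD}^\circ_{\mf q}$, the pairing $\langle \mf{R}_{\mf{L},\mf{M}}(\mf{v}_{\mf q}),\mf{u}\rangle$ vanishes, and therefore
$$\langle \mf{R}_{\mf{L},\mf{M}}(\mf{v}_{\mf q}),\mf{v}_{\mf q}\rangle=\langle \mf{R}_{\mf{L},\mf{M}}(\mf{v}_{\mf q}),\mf\xi(\mf q)\rangle\,.$$
Thus the energy is a first integral if and only if $\langle \mf{R}_{\mf{L},\mf{M}}(\mf{v}_{\mf q}),\mf\xi(\mf q)\rangle=0$ for all $\mf{v}_{\mf q}\in\mf{M}$, which is exactly the characterization recalled before the statement of $\mf\xi$ being a section of $\mf{\cR}^\circ$.

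The one point needing care is the ``only if'' direction, i.e. passing from ``$\mf{E}_{\mf{L},\mf{M}}$ is constant along each motion'' to ``the pairing vanishes at every point of $\mf{M}$''. Here I would invoke the fact, established earlier, that (\ref{EqLagrWithRF}) defines a genuine vector field on $\mf{M}$: an integral curve passes through every $\mf{v}_{\mf q}\in\mf{M}$, so the time derivative of $\mf{E}_{\mf{L},\mf{M}}$ at that point equals the pairing above, making conservation along all motions equivalent to pointwise vanishing on all of $\mf{M}$. I expect this to be the main (though modest) obstacle; past it the argument is a direct chain of identities. I would also remark that the conclusion is insensitive to the arbitrariness of $\mf\xi$ modulo sections of $\mf{\cD}$, since such sections already lie in $\mf{\cR}^\circ$ by the inclusion $\mf{\cD}_{\mf q}\subseteq\mf{\cR}^\circ_{\mf q}$ and contribute nothing to the pairing.
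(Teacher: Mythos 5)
Your proposal is correct and follows essentially the same route as the paper's proof: the energy balance equation $\frac{d}{dt}E_{L,M}=\dot q\cdot R_{L,M}$, the decomposition $\dot q=\xi(q)+u$ with $u\in\cD_q$, and the ideality of the reaction force reducing the work term to $R_{L,M}\cdot\xi$. Your extra remark on the ``only if'' direction (every point of $\mf M$ lies on a motion, by the Appendix) makes explicit a step the paper leaves implicit, but the argument is the same.
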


\begin{proof} The proof can be given in coordinates.
By (\ref{eq:LagrEq}), along any curve $t\mapsto (q_t,\dot q_t)\in M$,
\begin{equation}
\label{sec:lie-jacobi}
 \frac d{dt} E_{L,M}(q_t,\dot q_t)
 =
 \dot q_t\cdot 
 \left( \frac{d}{dt}\frac{\partial L}{\partial \dot q}
       - \frac{\partial L}{\partial q}\right)  (q_t,\dot q_t)
 =
 \dot q_t\cdot R_{L,M} (q_t,\dot q_t) \,.
\end{equation}
If $\dot q\in\cM(q)$, namely $\dot q =u+\xi(q)$ with $u\in \cD_q$, then
$$
  R_{L,M} (q,\dot q)\cdot \, \dot q = R_{L,M} (q,\dot q) \cdot \, \xi(q) 
$$
given that $R_{L,M} $ is ideal and hence annihilates $\cD_q$. 
It follows that $E_{L,M}$ is a first integral if and only if 
$R_{L,M} (q,\dot q) \cdot \xi(q) = 0$ for all $q\in Q$, $\dot q\in \cM_q$,
that is $\xi(q)\in \cR^\circ_q$ for all~$q\in Q$. 
\end{proof}

This shows that energy conservation is not a universal property of
nonholonomic mechanical systems with affine constraints. In particular, as we
have already stressed, it depends on the active forces that act on the
system.

In this respect we note that
at each point $\mf q\in \mf Q$, the
union of the fibers at $\mf q$ of the distributions  $\mf\cR^\circ$
relative to all functions $\mf V:\mf Q\to\bR{}$ equals $\mf\cD_q$. (In
fact, in expression (\ref{eq:RF1}), $\ell=\alpha+\beta+V'$ and the
matrix $S^T\big(SA^{-1}S^T\big)^{-1}SA^{-1}$ is, at each point~$q$,
the $A(q)^{-1}$-orthogonal projector onto $\cD_q^\circ$).
Thus, it follows from Proposition 1 that, given $\mf{T}$ and
$\mf{M}$, $\mf{E}_{\mf{L},\mf{M}}$ is a first integral in {\it all}
nonholonomic mechanical systems of the class
$(\mf{L}=\mf{T}-\mf{V}\circ\pi,\mf{Q},\mf{M})$, with {any} $\mf{V}:\mf
Q\to\bR{}$, if and only if the constraint is linear. This fact, which
can be generalized to allow for the presence of gyrostatic terms in
the Lagrangian, is a particular case of a result by 
\cite{terra-kobayashi} for general nonlinear constraints (with
linearity replaced by homogeneity). 

\begin{remark} {\rm
The result in Proposition \ref{thm1} does not depend on the choice of
$\xi$, which is defined up to the addition of a section of $\cD$,
because $\cD\subseteq \cR^\circ$. 
}\end{remark}

\subsection{Examples}

{\it 1. An affine nonholonomic particle. }
In order to illustrate the dependency of energy conservation on the
active forces we consider an affine version of Pars' nonholonomic
particle \cite{pars}. This system has configuration manifold $\bR3\ni
q= (x,y,z)$, Lagrangian
$$
  L(q,\dot q) = \frac12\|\dot q\|^2 - V(q)
$$
with a potential energy $V$ for which we will make different choices,
and constraint
$$
   \dot z +x\dot y -y\dot x-c =0
$$
with $c$ a nonzero real number. Thus  $n=3$ and $k=1$,
$\xi=c\partial_z$ and 
the distribution $\cD$ is spanned by the two vector fields 
$\partial_x+y\partial_z$ and $x\partial_x+y\partial_y$. $\cD$
has rank 2 except where $y=0$, so we disregard
these points and restrict the configuration manifold to
$Q=\bR3\setminus\{y=0\}$.
Note that not only $\xi$ is not a section of $\cD$ but, after the
restriction to~$Q$,
\begin{equation}\label{xi-not-in-D}
  \xi(q)\notin\cD_q \qquad \forall \, q\in Q \,.
\end{equation}
The constraint manifold $M$ is diffeomorphic to $Q\times\bR2$
and has global coordinates $(x,y,z,\dot x,\dot y)$. We may take
$S(x,y,z) = ( -y,x,1)$ and $s=-c$. 

Due to its low dimensionality, this example offers little variety of
behaviours. Specifically, the distribution $\cR^\circ$ depends on the
function $V$ but, since its fibers contain those of $\cD$, that have
dimension 2, there are only two possibilities: at a point $\bar q$,
either $\cR^\circ_{\bar q}= T_{\bar q}Q=\bR3$ or $\cR^\circ_{\bar
q}=\cD_{\bar q}$. The former possibility is realized if
$R_{L,M}(\bar q,\dot q)=0$ for all $\dot q\in \bR3$ and the second if
$R_{L,M}(\bar q,\dot q)\not=0$ for some $\dot q\in \bR3$.

At the same time, however, the low dimensionality makes 
all computations straightforward, and it is
simple to find potentials $V$ that exemplify the
different possibilities. In fact $A=\mathbb{I}$ and $\alpha=\beta=0$,
so $\ell=V'$, and $\sigma=0$ and (\ref{eq:RF1}) gives
\begin{equation}
\label{esempio}
   R_{L,M}
   \;=\;
   S^T (S S^T)^{-1} V' \Big|_M
   \;=\;
   \frac1{1+x^2+y^2}
   \begin{pmatrix}
       xy^2 & -xy & -y \\
       -xy  & x^2 &  x \\
       -y & x & 1 
  \end{pmatrix}
  \, V' \Big|_M \,.
\end{equation}
Thus, the reaction forces are independent of the $\dot q$. This
would make it straightforward to compute the fibers $\cR^\circ_q$,
which are simply the orthogonal complements to $R_{L,M}(q)$ in
$\bR3$, but we need not doing it because we already know what these
fibers are. 

Concerning the conservation of energy, there are two cases to consider:

\begin{list}{}
{\leftmargin2em\labelwidth1.2em\labelsep.5em\itemindent0em
\topsep0.5ex\itemsep-0.2ex}
\item[1.] If $V$ is such that $R_{L,M}(q)=0$ at all points $q\in Q$,
then $\cR^\circ=\bR3$ and energy is conserved.\footnote{That energy is
conserved if the reaction forces vanish identically is of course
obvious, for a variety of reasons. For instance, the nonholonomic
system is a subsystem of the unconstrained system with Lagrangian $L$
on $TQ$.} From (\ref{esempio}) one verifies that this situation is
encountered, e.g., with $V=0$ and $V=\frac12(x^2+y^2)$.

\item[2.] If $V$ is such that $R_{L,M}$ is not identically zero, then there
is a point $\bar q\in Q$ at which $\cR_{\bar q}^\circ=\cD_{\bar q}$.
Thus, by (\ref{xi-not-in-D}), 
$\xi(\bar q)\notin \cR_{\bar q}^\circ$ and so $\xi$ is not a section of
$\cR^\circ$. Hence energy is not conserved. An example is $V=z$. 
\end{list}
A richer variety of possibilities, including (a) conservation of
energy even with nonzero reaction forces, and (b) violation of the
conservation of energy even in the absence of active forces, can be
easily constructed by considering four- or five-dimensional
extensions of the nonholonomic particle, similar to that considered
in \cite{FGS2009}.  However, cases (a) and (b) are met also in known
mechanical systems. For instance, in the system formed by a heavy
sphere that rolls on a rotating horizontal plane, considered e.g. in
\cite{NF}, the potential energy
of the active forces is constant but the energy is not conserved
\cite{FS2015}. An example of case (a) is the following one.
  
\vskip4mm
\noindent{\it 2. A sphere rolling inside a rotating cylinder. }
As a second example we consider the system formed by a
homogeneous sphere constrained to roll without sliding inside a
cylinder that rotates with constant angular velocity about
its figure axis. We assume that the sphere is acted upon by
positional forces whose potential energy is a function of the position of
the sphere's center of mass. The case of a heavy sphere inside a
vertical cylinder that is at rest is classical \cite{routh, NF}. The
case of rotating cylinder, but without active forces, is considered
in~\cite{BMK2002}. 

Let $a$ be the radius of the sphere and $r+a$ the radius of the cylinder.
The holonomic system we start from
is formed by the sphere constrained to keep its center on a cylinder
$C$ of radius $r$. Fix an orthonormal frame $\Sigma=\{O;e_x,e_y,e_z\}$
with the origin $O$ on the figure axis of the cylinder and $e_z$
aligned with it. Using cylindrical coordinates
$(z,\gamma)$ relative to $\Sigma$ we identify $C$
with $\bR{}\times S^1$ and the configuration manifold is
$\mf{Q} = \bR{}\times S^1 \times \mathrm{SO}(3)\ni (z,\gamma,\cR)$,
where the matrix $\cR$ gives the sphere's orientation.
Up to an overall factor, the Lagrangian is then
\begin{equation}\label{L-cilindro}
  \mf L = \frac{1}{2}\left(r^2\dot\gamma^2+\dot z^2\right) + 
  \frac{I}{2}\|\omega\|^2   - V(z,\gamma)
\end{equation}
where (up to the same factor) $I$ is the moment of
inertia of the sphere and $V$ is the potential energy of the active
forces, and $\omega=(\omega_x,\omega_y,\omega_z)$ is the
angular velocity of the sphere relative to $\Sigma$ (that we think of as a
function on $T\mathrm{SO}(3)$). We add now the
nonholonomic constraint that the sphere rolls without sliding on a
cylinder, coaxial with $C$ and of radius $r+a$, that rotates with
constant angular velocity $\Omega e_z$ relative to $\Sigma$:
\begin{equation}\label{M-cilindro}
  r\dot\gamma+a\omega_z-(r+a)\Omega=0
  \,,\qquad 
  \dot z +a (\omega_x\sin\gamma- \omega_y\cos\gamma)=0  \,.
\end{equation}
These equations can be solved for $(\dot z,\dot \gamma)$ and the
constraint manifold $\mf{M}$ is thus diffeomorphic to $\bR{}\times
S^1\times T\mathrm{SO(3)}$. 

As local coordinates on $\mf{Q}$ we use the cylindrical coordinates
$(z,\gamma)$ of the center of mass of the sphere and three Euler angles
$(\varphi,\psi,\theta)\in S^1\times S^1\times(0,\pi)$ that fix the
orientation of a body frame relative to $\Sigma$ (we adopt the
convention of \cite{arnold-mmmc} for the choice of these angles). The
representative of $\mf L$ is 
$$
  L=\frac{1}{2}\left(r^2\dot\gamma^2+\dot z^2\right) + 
  \frac{I}{2}(\dot\theta^2+\dot\varphi^2+\dot\psi^2 + 
   2\dot\varphi\dot\psi\cos\theta)
  - V(z,\gamma) 
$$
and the constraint (\ref{M-cilindro}) becomes
$$
  r\dot\gamma+a(\dot\varphi+\dot\psi\cos\theta)
  - (r+a)\, \Omega =0
  \,,\qquad
  \dot z+a \,
  [\dot\psi\sin\theta\cos(\gamma-\varphi)+\dot\theta\sin(\gamma-\varphi)]
  = 0 \,.
$$
In these coordinates the vector field $\mf\xi$ becomes the constant
vector field $\xi=\Omega(\partial_\gamma+ \partial_\varphi)$, or $\xi
= (0,\Omega,\Omega,0,0)$, and a possible choice of $S$ and $s$ is
$$
   S = 
   \left(\begin{matrix}
     0 & r & a & a \cos\theta & 0 \\
     1 & 0 & 0 & a \sin\theta \cos(\gamma-\varphi) 
       & a \sin(\gamma-\varphi) 
   \end{matrix}\right)
   \,,\qquad
   s = (0,- (r+a) \,\Omega) \,.
$$
As local coordinates on $\mf{M}$ we may use
$(z,\gamma,\varphi,\psi,\theta,\dot\varphi,\dot\psi,\dot\theta)$. 
From (\ref{eq:RF1}), the reaction force is then
$$
 R_{L,M}
 =
 \frac{I}{I+a^2} \Big(
 f \,,\;
 V'_\gamma \,,\;  
 \frac{a}{r} V'_\gamma \,,\;  
 af \cos(\gamma-\varphi) \sin\theta + \frac{a}{r}V'_\gamma\cos\theta \,,\;  
 af \sin(\gamma-\varphi) 
 \Big)
$$
with
$
  f 
  = 
  \frac{a^2}r 
  \big(\dot\varphi+\dot\psi\cos\theta - \frac{a+r}a \Omega \big)
  \big( \dot\theta \cos(\gamma-\varphi) - 
        \dot\psi \sin(\gamma-\varphi) \sin\theta \big) 
  + V'_z 
$.
Therefore, 
$$
  R_{L,M} \cdot \xi = \frac{a+r}a \Omega V'_\gamma \,.
$$
This shows that, when $\Omega\not=0$ and hence $\xi\not=0$ and
the constraint is affine,
the energy is conserved if and only $V$ depends on $z$ alone. This
includes the case of a heavy sphere that rolls inside a rotating vertical
cylinder, for which $V=gz$.

\section{Conservation of momenta and gauge momenta of lifted actions}

\subsection{Conservation of momenta}
We consider now a second problem in which the reaction forces of a
nonholonomic mechanical system with affine constraints play a
role: the conservation of the momentum map of a lifted action that
leaves the Lagrangian $\mf L$ invariant, and of its `gauge'
generalization. 

This topic has been widely studied in the case of nonholonomic
mechanical systems
with linear constraints. For such systems, the momentum map is in
general not conserved, but in certain cases some of its components
are conserved. In early studies, it was pointed out that a sufficient
condition for the conservation of a component of the momentum map is
that its infinitesimal generator is `horizontal', that is, a section
of the constraint distribution (see e.g.
\cite{bates-sniatycki,marle95,BKMM}). It was later proved that the
components of the momentum map that are conserved are exactly those
whose infinitesimal generators are sections of $\cR^\circ$ \cite{FRS}.
Our first goal here is to show that this result holds in the case of
affine constraints, too. In fact, the affine part of the constraint
plays no role in it.

Consider an action $\mf{\Psi}:G\times \mf{Q} \to \mf{Q}$ of a Lie
group $G$ on the configuration manifold  $\mf{Q}$. For each
$\mf{q}\in\mf{Q}$ we write as usual
$\mf{\Psi}_{g}(\mf{q})$ for $\mf{\Psi}(g,\mf{q})$.
The tangent lift $\mf{\Psi}^{T\mf{Q}}:G\times
T\mf{Q}\to T\mf{Q}$ of the action $\mf{\Psi}$ is the action of $G$ on
$T\mf{Q}$ given by
$$
   \mf{\Psi}^{T\mf{Q}}_{g}(\mf{v}_{\mf q}) = T_{\mf q}\mf{\Psi}_g
  \cdot \mf{v}_{\mf q} 
$$
(in coordinates, $\Psi^{TQ}_g(q,\dot q) = \big( \Psi_g(q),
\Psi'_g(q)\dot q\big)$ with $\Psi'_g = \der{\Psi_g}q$). 
We denote by $\mf{Y}_\eta := \frac d{dt}\mf{\Psi}_{\exp(t\eta)}|_{t=0}$ the
infinitesimal generator relative to an element 
$\eta\in\mathfrak g$, the Lie algebra of
$G$. Correspondingly, the $\eta$-component of the momentum map of
$\mf{\Psi}^{T\mf{Q}}$ is the function $\mf{J}_\eta :T\mf Q\to\bR{}$
defined as
$$
  \mf{J}_\eta (\mf v_{\mf q}) := 
  \langle \mf{p}_{\mf L}(\mf v_{\mf q}) , \mf{Y_\eta}(\mf q) \rangle
$$
(in coordinates, $\der L{\dot q}\cdot Y_\eta$). 

The tangent lift of a vector field $\mf{Z}$ on $\mf{Q}$ is the 
vector field $\mf{Z}^{T\mf{Q}}$ on $T\mf{Q}$ whose integral curves
$t\mapsto \mf{v}(t)$ are velocities of integral curves $t\mapsto
\mf{q}(t)$ of $\mf{Z}$, that is $\mf{v}(t)=\mf{Z}(q(t))\in
T_{\mf{q}(t)}\mf{Q}$ (in coordinates,
$Z^{TQ} 
 = 
 Z_i\partial_{q_i} + \dot q_j \der{Z_{i}}{q_j} \partial_{\dot q_i}$).
Clearly, $\mf{Y}_\eta^{T\mf{Q}}
=\frac d{dt}\mf{\Psi}^{T\mf{Q}}_{\exp(t\eta)}|_{t=0}$.
  
Consider now a nonholonomic mechanical system with affine constraints 
$(\mf L,\mf Q,\mf M)$ and assume that $\mf L$ is invariant
under $\mf\Psi^{T\mf Q}$,
namely $\mf{L}\circ\mf{\Psi}^{T\mf{Q}}_{g}=\mf{L}$ for all $g\in G$.
Then, we say that the function
$
 \mf{J}_\eta \big|_{\mf M} 
$
is the {\it momentum} of $(\mf L,\mf Q,\mf M)$ generated by $\mf
Y_\eta$. 

\begin{proposition}\label{prop4} Assume that $\mf L$ is invariant
under $\mf\Psi^{T\mf Q}$. Then a momentum
is a first integral of  $(\mf{L},\mf{Q},\mf{M})$ if and only if its
generator is a section of $\mf{\cR}^\circ$.
\end{proposition}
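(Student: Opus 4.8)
The plan is to mirror the computation in the proof of Proposition~\ref{thm1}, replacing the Liouville field $\dot q$ by the generator $Y_\eta$ and the energy balance by a momentum balance. Working in coordinates, I would differentiate the momentum $J_\eta\big|_M = \der L{\dot q}\cdot Y_\eta(q)$ along an arbitrary curve $t\mapsto(q_t,\dot q_t)\in M$ solving the constrained equations (\ref{eq:LagrEq}). By the product rule this yields two contributions, one from $\frac{d}{dt}\der L{\dot q}$ and one from $\frac{d}{dt}Y_\eta(q_t)=\der{Y_\eta}q\dot q$.

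First I would substitute the constrained Lagrange equation, which on $M$ reads $\frac{d}{dt}\der L{\dot q}=\der Lq + R_{L,M}$, into the first contribution. Collecting terms, the derivative becomes $\big[\der Lq\cdot Y_\eta + \der L{\dot q}\cdot\der{Y_\eta}q\dot q\big] + R_{L,M}\cdot Y_\eta$.

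The key step is to recognize the bracketed expression as the action on $L$ of the tangent lift $Y_\eta^{TQ}$, whose coordinate form $Y_{\eta,i}\partial_{q_i}+\dot q_j\der{Y_{\eta,i}}{q_j}\partial_{\dot q_i}$ is recalled just above the statement. The hypothesis that $\mf L$ is invariant under $\mf\Psi^{T\mf Q}$ is equivalent, infinitesimally, to $Y_\eta^{TQ}L=0$, which is exactly the vanishing of this bracket. Hence the free part cancels and one is left with the momentum balance $\frac{d}{dt}\big(J_\eta\big|_M\big)=R_{L,M}\cdot Y_\eta$, the precise analogue of (\ref{sec:lie-jacobi}).

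Finally, $J_\eta\big|_M$ is a first integral if and only if this quantity vanishes along every constrained motion, that is $R_{L,M}(q,\dot q)\cdot Y_\eta(q)=0$ for all $(q,\dot q)\in M$; by the mechanical characterization of $\mf{\cR}^\circ$ (the reaction does zero work on $Y_\eta$ in all constrained motions) this is exactly the condition that $Y_\eta$ be a section of $\mf{\cR}^\circ$. I expect no serious obstacle: the only points requiring care are the passage from the global invariance $\mf L\circ\mf\Psi^{T\mf Q}_g=\mf L$ to its infinitesimal form $Y_\eta^{TQ}L=0$, obtained by differentiating at the identity, and the observation—already flagged in the text—that the affine term $\mf\xi$ (equivalently $s$) never enters this computation, so the result is identical to the linear case.
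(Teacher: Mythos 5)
Your proposal is correct and follows essentially the same route as the paper: the paper's proof is exactly the identity $\frac d{dt}(J_\eta|_M)=Y_\eta^{TQ}(L)\big|_M+R_{L,M}\cdot Y_\eta\big|_M$ (which you derive explicitly via the product rule and substitution of the constrained equations), followed by the invariance argument $Y_\eta^{TQ}(L)=0$ and the identification of the vanishing of $R_{L,M}\cdot Y_\eta$ on $M$ with $Y_\eta$ being a section of $\mf{\cR}^\circ$. You merely spell out the computation that the paper leaves implicit.
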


\begin{proof} We may work in coordinates. A computation gives
$\frac d{dt} (J_\eta|_M) = Y_\eta^{TQ}(L)\big|_M + R_{L,M}\cdot Y_\eta
\big|_M$. The invariance of $L$ implies $Y_\eta^{TQ}(L)=0$. Thus,
$J_\eta|_M$ is a first integral if and only if, at each $q\in Q$,
$Y_\eta$ annihilates all reaction forces $R_{L,M}(q,\dot q)$ with
$\dot q\in \cM_q$, that is, $Y_\eta(q)\in\cR^\circ_q$. \end{proof}

\subsection{Conservation of gauge momenta}
It was an original idea of \cite{BGM} that, for nonholonomic
mechanical systems with linear constraints whose Lagrangian is
invariant under a lifted action, certain conserved quantities that
are not components of the momentum map may be viewed as linked to the
group by a gauge-like mechanism. This situation extends to
nonholonomic mechanical systems with affine constraints. 
For a general study of this topic in systems with linear
constraints, and more information on the topic, including e.g. its
relation to the so called `momentum equation', see
\cite{FGS2008,FGS2009,FS2009,FGS2012}. 

Following the terminology of \cite{FGS2009} we say that a vector field
$\mf Y$ on $\mf Q$ is a {\it gauge symmetry} of
$(\mf L,\mf Q,\mf M)$ relative to the action $\mf \Psi$ if it 
is everywhere tangent to the orbits of $\mf \Psi$ and, moreover,
$$
  \mf Y^{T\mf Q}(\mf L)\big|_{\mf M}=0 \,.
$$
This invariance condition of $\mf L$ is independent of the
invariance of $\mf L$ under $\mf\Psi^{T\mf Q}$, even though it implies
that $\mf V$ is $\mf\Psi$-invariant. 
The {\it gauge momentum} generated by a gauge symmetry $\mf Y$ is the
function 
$$
  \mf J := 
  \langle \mf{p}_{\mf L} , \mf{Y} \rangle \big|_{\mf M} \,.
$$
The gauge symmetry that generates a given gauge momentum needs not be
unique. 

\begin{proposition}
A gauge momentum is a first integral of $(\mf L,\mf Q,\mf M)$ if and
only if it is generated by a gauge symmetry which is a section of
$\cR^\circ$.
\end{proposition}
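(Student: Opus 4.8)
The plan is to adapt, almost verbatim, the computation used in the proof of Proposition~\ref{prop4}, the only change being that the full invariance $\mf Y^{T\mf Q}(\mf L)=0$ is replaced by the weaker gauge condition $\mf Y^{T\mf Q}(\mf L)\big|_{\mf M}=0$. First I would fix a gauge symmetry $\mf Y$ that generates the given gauge momentum $\mf J=\langle\mf{p}_{\mf L},\mf Y\rangle\big|_{\mf M}$ and work in coordinates. Exactly as in Proposition~\ref{prop4}, differentiating $J|_M=\der L{\dot q}\cdot Y$ along the nonholonomic vector field on $M$ and inserting the equations of motion (\ref{eq:LagrEq}) yields the identity of functions on $M$
\begin{equation*}
  \frac{d}{dt}\big(J|_M\big) \;=\; Y^{TQ}(L)\big|_M + R_{L,M}\cdot Y\big|_M \,.
\end{equation*}
This step uses only the definition of the tangent lift $Y^{TQ}$ and the form of the reaction force, not the invariance of $L$, so it carries over unchanged.

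The second step is to discard the first term on the right using the gauge condition: by definition a gauge symmetry satisfies $Y^{TQ}(L)|_M=0$, so that $\frac{d}{dt}(J|_M)=R_{L,M}\cdot Y|_M$. Since $R_{L,M}(q,\dot q)\cdot Y(q)=0$ for every $(q,\dot q)\in M$ is precisely the statement that $Y(q)$ annihilates every reaction force in $\cR_q$, i.e.\ that $Y(q)\in\cR^\circ_q$ for all $q$, I conclude that, for this particular generator, $\mf J$ is a first integral if and only if $\mf Y$ is a section of $\cR^\circ$. This already settles the implication ($\Leftarrow$): if $\mf J$ is generated by some gauge symmetry that is a section of $\cR^\circ$, the right-hand side vanishes and $\mf J$ is conserved.

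The point that requires a little care, and which I expect to be the only genuinely delicate issue, is the non-uniqueness of the generating gauge symmetry, since the statement asserts the existence of a generator lying in $\cR^\circ$ rather than a property of a prescribed one. The resolution is that the left-hand side $\frac{d}{dt}(J|_M)$ is the derivative of the fixed function $J|_M$ along the fixed nonholonomic vector field on $M$, hence independent of the choice of generator; consequently $R_{L,M}\cdot Y|_M$ takes the same value for every gauge symmetry $\mf Y$ generating $\mf J$. Thus if $\mf J$ is a first integral then $R_{L,M}\cdot Y|_M=0$ for the given generator, so that generator is itself a section of $\cR^\circ$, which establishes the implication ($\Rightarrow$) and in fact shows that \emph{every} generator of a conserved gauge momentum lies in $\cR^\circ$. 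I would close by remarking that, as in Proposition~\ref{prop4}, the affine part of the constraint enters only through the explicit form of $R_{L,M}$ in (\ref{eq:RF1})--(\ref{eq:RF2}) and plays no structural role.
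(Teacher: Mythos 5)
Your proof is correct and follows essentially the same route as the paper, which simply states that the argument ``goes just as that of Proposition~\ref{prop4}'': differentiate $J|_M$ along the constrained flow, use the gauge condition $Y^{TQ}(L)|_M=0$ in place of full invariance, and identify the vanishing of $R_{L,M}\cdot Y|_M$ with $Y$ being a section of $\cR^\circ$. Your explicit handling of the non-uniqueness of the generator (observing that $R_{L,M}\cdot Y|_M$ is the same for every gauge-symmetry generator of a given gauge momentum, so that conservation forces \emph{every} such generator into $\cR^\circ$) is a point the paper leaves implicit, and is a correct and worthwhile clarification.
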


The proof goes just as that of Proposition \ref{prop4}. 

The need of considering gauge momenta generated by gauge symmetries
that are sections of $\cR^\circ$, not only those generated by sections
of $\cD$, is demonstrated by the example in the following section.
This is the heavy sphere that rolls inside a rotating vertical 
cylinder. This system has as a first integral that depends smoothly
on the angular velocity of the cylinder and can be interpreted as a
gauge momentum. Interestingly, when the cylinder is at rest this
gauge momentum is generated by a gauge symmetry that is a section of
$\cD$; but as soon as the cylinder rotates, the generating gauge
symmetry leaves $\cD$ and becomes a section of $\cR^\circ$.


\subsection{Example}
Consider a heavy sphere that rolls without sliding inside a cylinder
that rotates uniformly about its vertical axis, namely, the system of
section 3.2 with potential energy $V=gz$.
The Lagrangian $\mf L$ and the configuration
manifold $\hat Q$ are independent of $\Omega$, while the constraint
manifold $\mf M$ depends on $\Omega$ through the vector field
$\mf\xi$. We thus denote it by $\mf M_\Omega$. 

It is classically known \cite{routh,NF} that, when $\Omega=0$, the system 
$(\mf L,\mf Q,\mf M_0)$ has the two first integrals
$\mf F_0:=\mf F|_{\mf M_0}$ and $\mf K_0:=\mf K|_{\mf
M_0}$, where
\begin{equation}\label{FeH}
  \mf F = I\omega_z -ar \dot\gamma
  \,,\qquad
  \mf K = a ( \omega_x \cos\gamma+ \omega_y \sin\gamma) -z\dot\gamma 
  \,.
\end{equation}
These two first integrals have been linked in \cite{BGM} to the action
$\mf\Psi$ of the group $G=S^1\times\mathrm{SO(3)}\ni(\zeta,S)$ on $\mf
Q\ni(z,\gamma,\cR)$ given by
$
  \mf\Psi_{\zeta,S}(z,\gamma,\cR) = (z, \gamma+\zeta, S_\zeta \cR S)
$,
where $S_\zeta$ is the matrix of the rotation by $\zeta$ about the
third axis. This action emerges naturally in this problem because it
leaves the Lagrangian (\ref{L-cilindro}) and the constraint
(\ref{M-cilindro}) invariant.
With reference to this action, $\mf F_0$ is a momentum
generated by an infinitesimal generator that is a section of $\mf\cD$
and $\mf K_0$ is a gauge momentum generated by a gauge symmetry which
is a section of~$\mf\cD$, see \cite{BGM}. 

When $\Omega\not=0$, the system  $(\mf L,\mf Q,\mf M_\Omega)$
has the two first integrals
$$
  \mf F_\Omega:=\mf F|_{\mf M_\Omega} \,,\qquad 
  \mf K_\Omega:=\mf K|_{\mf M_\Omega}
$$
which appear in \cite{BMK2002}. (Reference \cite{BMK2002} considers
only the case $V=0$, but these two first integrals exist for any
$V=V(z)$, see the remark below). We now show that, with reference to
the considered action $\mf \Psi$, when $\Omega\neq0$ the function
$\mf F_\Omega$ is a momentum generated by an infinitesimal generator
which is a section of $\mf\cD$ and the function $\mf K_\Omega$ is a
gauge momentum generated by a gauge symmetry which is a section of
$\mf \cR^\circ$, not of $\mf \cD$. Therefore, at variance from the
case $\Omega=0$, in order to link the first integral $\mf K_\Omega$
to the group action using the gauge mechanism, when $\Omega\neq0$
it is necessary to take into account the role of the reaction forces.

To prove these assertions we pass to the local coordinates
$(z,\gamma,\varphi,\psi,\theta)$ on $\mf Q$. The 
representatives $F$ of $\mf F$ and $K$ of $\mf K$ are obtained from
(\ref{FeH}) with
$\omega_x = \dot\theta\cos\varphi+\dot\psi\sin\varphi \sin\theta$, 
$\omega_y = \dot\theta\sin\varphi-\dot\psi\cos\varphi \sin\theta$, 
$\omega_z =  \dot\varphi+\dot\psi \cos\theta$. 
It follows from the analysis of section 3.2 that, in these
coordinates, 
$$
\begin{aligned}
  &\cD =
   \textrm{span}_\bR{}
   \big\{
   a\partial_\gamma - r \partial_\varphi  , 
   \partial_\theta - a \sin(\gamma-\varphi) \,\partial_z ,
   \partial_\psi -a \cos(\gamma-\varphi) \sin\theta\,\partial_z -
        \cos\theta\,\partial_\varphi 
    \big\} 
  \\
  &\cR^\circ =
  \textrm{span}_\bR{}  
  \left\{
  \partial_\gamma \,,\; \partial_\varphi \,,\; 
  \partial_\theta - a \sin(\gamma-\varphi) \,\partial_z \,,\; 
  \partial_\psi -a \cos(\gamma-\varphi) \sin\theta\,\partial_z -
  \cos\theta\,\partial_\varphi 
  \right\} \,.
\end{aligned}
$$
The tangent spaces to the group orbits are spanned by $\partial_\gamma$
and by three infinitesimal generators of the
$\mathrm{SO(3)}$--action, e.g. the generators 
\[
\begin{aligned}
   &\eta_x =
   \sin\varphi (\partial_\psi-\cos\theta\,\partial_\varphi)
   +\cos\varphi\, \sin\theta \,\partial_\theta 
   \,, \qquad
   \\ 
   &\eta_y = 
   \cos\varphi (\partial_\psi-\cos\theta\,\partial_\varphi)
   - \sin\theta \,  \sin\varphi \, \partial_\theta \,,
  \\
  &\eta_z 
  = 
  \partial_\varphi 
\end{aligned}
\]
of the components $\omega_x,\omega_y,\omega_z$ of the
$\mathrm{SO(3)}$-momentum map. The vector fields
$$
  Y_F:=\eta_z-\frac ar\partial_\gamma
  \,,\qquad
  Y_K := 
  \frac{a}{I \sin\theta} (\eta_x \cos\gamma - \eta_y\sin\gamma) 
  -\frac{z}{r^2} \partial_\gamma 
$$
are tangent to the group orbits. $Y_F$ is an infinitesimal generator
of the $S^1\times\mathrm{SO}(3)$-action and is a section of $\cD$. As
such it generates a conserved momentum, that equals $F_\Omega$. 
$Y_K$ is instead a section of $\cR^\circ$ and is 
(the local representative of) a gauge symmetry because 
$$
  Y^{TQ}(L) =
  \big(\dot z + a\omega_x\sin\gamma- a\omega_y\cos\gamma\big)
  \dot\gamma
$$
vanishes on $M_\Omega$, see (\ref{M-cilindro}). 
Thus, $Y_K$ generates a conserved gauge momentum,
which equals $K_\Omega$. 

If we use coordinates $(z,\gamma,\varphi,\psi,\theta,\dot
\varphi,\dot\psi,\dot\theta)$ on $\mf M_\Omega$, then 
$$
  F_\Omega = (I+a^2)\omega_z - a (a+r) \Omega
  \,,\qquad
  K_\Omega = a\omega_x\cos\gamma +  a\omega_y\sin\gamma +
             \frac ar z\omega_z - \frac{r+a}r \Omega z \,.
$$

It remains to prove that $K_\Omega$ is not generated by any gauge
symmetry which is a section of $\cD$. To this end, we make the
following observations. We call {\it generator} of a gauge momentum
any vector field $Z$---not necessarily a gauge symmetry---such that
$\mf J = \langle \mf{p}_{\mf L} , \mf{Z} \rangle \big|_{\mf M}$.
Then, {\it a gauge momentum has at most one generator that is a
section of $\cD$}. We may prove this in coordinates. If $W$ and $Z$
are the representatives of two generators of a gauge momentum $\mf J$
then $(W-Z)\cdot (A\dot q-b)|_M=0$. Equivalently, at each point $q$,
$(W-Z)\cdot (Au+A\xi-b)=0$ for all $u\in \cD_q$. Hence $Z$ and $W$
satisfy the two conditions
$$
   (W-Z)\cdot Au=0 \ \forall u\in \cD_q 
   \,,\qquad 
   (W-Z)\cdot (A\xi-b)=0 \,.
$$
Since $A$ defines a metric, the first of these two conditions implies
that, at each point $q$, $W-Z$ is orthogonal, in this metric, to
$\cD_q$. Hence all generators of a gauge momentum have the same
component along $\cD$. This argument shows that, moreover, {\it if $Y$ is
a generator of a gauge momentum, then its unique generator which is a
section of $\cD$, if it exists, is $\Pi_AY$}, where, at each point
$q$, $\Pi_A$ is the $A$-orthogonal projector onto $\cD_q$.
Furthermore, {\it $\Pi_AY$ is a generator of the gauge momentum if and only
if $(\Pi_AY-Y)\cdot(A\xi-b)=0$.}

In our case
$
  \Pi_AY_K 
  = 
  -\frac{a^2 z}{(a^2+I) r^2} \, \partial_\gamma 
  +
  \big( \frac{a z}{(a^2+I) r}+\frac{a\, \cos \theta
  \sin(\gamma-\varphi)}{ I\sin\theta} \big)\, \partial_\varphi
  - 
  \frac{a \sin(\gamma-\varphi)}{I \sin\theta} \, \partial_\psi
  +
  \frac aI \cos(\gamma - \varphi) \, \partial_\theta
$
is not a generator of $K_\Omega$ since $(\Pi_AY_K-Y_K)\cdot A\xi
= \frac{I (a+r) \Omega z}{(a^2+I) r} $.

\begin{remark} \rm
We have considered the case of constant gravity,
with potential energy $V(z)=gz$, in order to make a direct comparison
with \cite{BGM}, that treats only this case. However,
it is easy to verify that the two first integrals $F_\Omega$ and
$K_\Omega$ exist, and retain their interpretations as (gauge)
momenta, with the same generators, if the sphere is acted upon by
{\it any} potential energy that depends only on $z$, that is, which
is invariant under the considered action $\hat\Psi$. This is a
`Noether-like' property, called `weak-Noetherianity' in
\cite{FGS2008}. This example thus shows that, in this respect, the
case of nonholonomic systems with affine constraints differs from
that of nonholonomic systems with linear constraints, where the only gauge momenta
with such a weakly-Noetherian property are those that have a
generator which is a section of $\cD$ \cite{FGS2012}.
\end{remark}

\section{Conclusions}

At the core of Proposition 1 lies the balance equation of the energy
(\ref{sec:lie-jacobi}). Together with the assumption of 
ideality of the constraints it implies that
conservation of energy is equivalent to
\begin{equation}\label{ultima}
  \langle \mf R(\mf v),\mf\xi(\mf v)\rangle = 0 
  \qquad \forall v\in \mf M \,,
\end{equation}
that is, to the fact that, in any constrained kinematical state, the
reaction force $\mf R$ does not do any work on the nonhomogeneous
term $\mf\xi$ of the constraint.

If the only knowledge assumed on the reaction forces is d'Alembert
principle, namely that they annihilate the distribution $\mf\cD$, then
all can be deduced from equation (\ref{ultima}) is that
there is conservation of energy when $\mf\xi$ is a section of $\mf\cD$,
that is, when the constraint is linear. This point of view is quite
widespread: the very use of expressions such as ``undetermined
multipliers'' or ``unknown [reaction] forces'' indicates a
perception of the reaction forces as intrinsically unknown
objects.

The novelty of our approach is that we exploit the fact that, for a given
system, the reaction forces are known functions of the kinematic
states. This allows to identify the cases in which the energy is
conserved even if the constraint is genuinely affine. 

Very  similar considerations can be made concerning the conservation
of momenta and gauge momenta.

The general message we convey, besides the specific information about
the conservation of energy and (gauge) momenta in nonholonomic
mechanical systems with
affine constraints, is that the reaction forces should be taken into
a clearer account in theoretical studies of these systems.
Particularly when analyzing differences from holonomic systems 
(Noether theorem, conservation of energy, hamiltonianization,
perhaps invariant measures) it might be important to exploit the
presence of the reaction
forces in the equations of motion. From this point of view,
advancement in the comprehension of nonholonomic mechanical systems is not a purely
geometric matter and the comprehension of some dynamical aspects
might pass through a better
understanding---and perhaps attempts of classification---of the
reaction forces.

\section{Appendix: The equations of motion and the reaction forces}

We derive here the expression of the reaction forces as
functions of the kinematic states of the system for a nonholonomic
system with affine constraints. 

\begin{proposition} In the hypotheses and with the notation of section
2.1:

\begin{list}{}
{\leftmargin2em\labelwidth1.2em\labelsep.5em\itemindent0em
\topsep0.5ex\itemsep-0.2ex}
\item[1.] For any $\bar v_0\in \mf M$ there exist unique
curves $\bR{}\ni t\mapsto (\mf v_t,\mf R_t) \in \mf M\times \mf \cD^\circ$
such that  $\mf v_0=\bar v_0$ and
\begin{equation}\label{EqLagr}
  [\mf L](\mf v_t)  = \mf R_t  \qquad \forall t \,.
\end{equation}

\item[2.] There exists a function $\mf R_{\mf L,\mf M}: \mf M \to \mf
\cD^\circ$ such that, for all $\bar v_0$, $\mf R_t=\mf R_{\mf L,\mf
M}(\mf v_t)$ $\forall t$.

\item[3.] In any system of bundle coordinates $(q,\dot
q)$ in $T\mf Q$, if $M$ is described as $S(q)\dot q+s(q)=0$, then
the representative $R_{L,M}$ of $\mf R_{\mf L,\mf M}$ is the restriction to
$M$ of the function (\ref{eq:RF1}),~(\ref{eq:RF2}).

\end{list}
\end{proposition}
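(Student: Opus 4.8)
The plan is to work in an arbitrary system of bundle coordinates and to solve explicitly for a Lagrange multiplier; the single resulting computation delivers the coordinate formula of part~3, the existence of the function of part~2, and the existence--uniqueness of part~1, after which the passage to the global objects is immediate thanks to the coordinate-independence of the restriction of (\ref{eq:RF1}) to $M$ already recorded in the Remark following (\ref{eq:RF2}). First I encode ideality: since the reaction force lies in $\cD^\circ$, which in coordinates is the range of $S(q)^T$, I write $R=S^T\lambda$ with an unknown $\lambda\in\bR k$. For the mechanical Lagrangian (\ref{L}) one computes $\frac d{dt}\der L{\dot q}-\der Lq = A\ddot q+\ell$ with $\ell$ as in (\ref{eq:RF2}), so Lagrange equations (\ref{eq:LagrEq}) read $A\ddot q+\ell=S^T\lambda$.

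Next I differentiate the constraint along motions: $\frac d{dt}(S\dot q+s)=S\ddot q+\s$, with $\s$ exactly the vector in (\ref{eq:RF2}), and on $M$ this derivative must vanish, giving $S\ddot q=-\s$. Eliminating $\ddot q=A^{-1}(S^T\lambda-\ell)$ from the equation of motion and inserting it here yields $SA^{-1}S^T\lambda=SA^{-1}\ell-\s$. The key algebraic fact is that $SA^{-1}S^T$ is invertible: $A$ is positive definite and $S$ has full row rank $k$, so for $\mu\neq0$ one has $\mu\cdot SA^{-1}S^T\mu=(S^T\mu)\cdot A^{-1}(S^T\mu)>0$, whence the $k\times k$ matrix is positive definite. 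Inverting gives $\lambda=(SA^{-1}S^T)^{-1}(SA^{-1}\ell-\s)$ and hence $R=S^T\lambda$ equals (\ref{eq:RF1}); this is part~3, and together with the quoted coordinate-independence it defines the global map $\mf R_{\mf L,\mf M}:\mf M\to\mf\cD^\circ$ of part~2.

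For part~1 I substitute $\lambda$ back into $\ddot q=A^{-1}(S^T\lambda-\ell)$, a smooth second-order ODE whose associated first-order system on $TQ$ has a unique integral curve through each datum, defined for all $t$ since the vector field is assumed complete. To see that $M$ is preserved, set $g_t:=S(q_t)\dot q_t+s(q_t)$ and note that, by the very definition of $\lambda$, the identity $SA^{-1}S^T\lambda=SA^{-1}\ell-\s$ holds everywhere, not only on $M$; hence $S\ddot q=SA^{-1}(S^T\lambda-\ell)=-\s$ identically, and $\dot g=S\ddot q+\s\equiv0$. Thus $g_t=g_0$, which is zero whenever $\bar v_0\in\mf M$, producing the unique curve $t\mapsto(\mf v_t,\mf R_t)$ with $\mf R_t=\mf R_{\mf L,\mf M}(\mf v_t)$.

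The computation is essentially careful bookkeeping of the d'Alembert multiplier, and the only two points deserving genuine attention are the positive-definiteness of $SA^{-1}S^T$ (which secures a well-defined $\lambda$) and the invariance of $M$ under the ODE built from that $\lambda$. I expect the latter to be the delicate step: the multiplier is dictated by the constraint only where $S\dot q+s=0$, yet the ODE must be written on a full neighborhood, so one must verify---as the computation above does---that extending $\lambda$ by the same formula off $M$ still forces $\frac d{dt}(S\dot q+s)$ to vanish and thereby keeps motions on the constraint manifold.
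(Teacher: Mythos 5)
Your local analysis coincides with the paper's proof step for step: you encode ideality as $R=S^T\lambda$, use $[L]=A\ddot q+\ell$ together with the differentiated constraint $S\ddot q+\sigma=0$ to solve for $\lambda$, obtain (\ref{eq:RF1}), and then observe that the second-order ODE obtained by substituting this $\lambda$ back is defined on the whole chart and preserves $M$ because $\frac{d}{dt}(S\dot q+s)=S\ddot q+\sigma$ vanishes identically along its solutions, not just on $M$. This is exactly the paper's argument, and your explicit positive-definiteness proof for $SA^{-1}S^T$ is a small improvement on the paper, which merely asserts invertibility.

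The one substantive gap is the globalization. You declare the passage to the global objects ``immediate thanks to the coordinate-independence\dots recorded in the Remark,'' but that Remark only covers the replacement of $(S,s)$ by $(PS,Ps)$ \emph{within a fixed chart}; it says nothing about a change of bundle coordinates. Parts 2 and 3 assert that the functions (\ref{eq:RF1}) computed in different charts are representatives of one and the same map $\mf R_{\mf L,\mf M}:\mf M\to\mf\cD^\circ$, and to know this one must check covariance: under $q=\cC(\tilde q)$ one has $\tilde S=\tilde P\,[S\circ\cC]\,\cC'$ and $\tilde A={\cC'}^T[A\circ\cC]\,\cC'$, while $\tilde\ell$ and $\tilde\sigma$ each acquire second-derivative terms in $\cC''$; the paper devotes the entire second half of its proof to verifying that these terms cancel in the combination $\tilde S\tilde A^{-1}\tilde\ell-\tilde\sigma=\tilde P\,(SA^{-1}\ell-\sigma)$, so that the reaction force transforms as a 1-form. (Alternatively, you could extract the matching from uniqueness---through every point of $\mf M$ there is a solution of the chart-independent equation (\ref{EqLagr}), along which $R_t$ is forced to equal each chart's formula---but some such argument must be made explicit; the Remark you cite does not supply it.)
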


\begin{proof} We first work in coordinates  and then globalize the
result. The matrix $A=\dder L {\dot q}{\dot q} =
\dder {L_2} {\dot q}{\dot q}$ is nonsingular and independent of the
velocities. 

Assume $t\mapsto (q_t,\dot q_t,R_t)$ is the local representative of a
curve in $M\times \cD^\circ$ that satisfies (\ref{EqLagr}). Then, for
all $t$,
\begin{equation}\label{vincolo}
   S(q_t)\dot q_t+s(q_t) = 0 
\end{equation}
and $R_t\in\mathrm{range} S(q_t)^T$. Thus, there exists a curve
$t\mapsto \lambda_t\in\bR k$ (the Lagrange multiplier) such that
$R_t=S(q_t)^T\lambda_t$. Since  $[L]=A(q)\ddot q + \ell(q,\dot q)$,
equation (\ref{EqLagr}) is
$$
   A(q_t) \ddot q_t + \ell(q_t,\dot q_t) = S(q_t)^T\lambda_t 
$$
and gives $\ddot q_t = A(q_t)^{-1}[S(q_t)^T\lambda_t - \ell(q_t,\dot
q_t)]$.
Inserting this expression into $S(q_t)\ddot q_t +\s(q_t,\dot q_t)=0$,
that follows from (\ref{vincolo}), gives
$$
   S(q_t)A(q_t)^{-1}[S(q_t)^T \lambda_t - \ell(q_t,\dot q_t)] 
   + \s(q_t,\dot q_t)=0 \,.
$$
This  equation can be solved for $\lambda_t$ because the matrix
$SA^{-1}S^T$ is invertible and gives
\begin{equation}\label{lambda}
   \lambda_t = [S(q_t)A(q_t)^{-1}S(q_t)^T]^{-1}
   \big[ S(q_t)A(q_t)^{-1}\ell(q_t,\dot q_t) - \s(q_t,\dot q_t)\big] 
\end{equation}
or $R_t = R_{L,M}(q_t,\dot q_t)$ with $R_{L,M}$ as stated. Together
with the independence of $R_{L,M}$ of the choice of $S$ and $s$, see
the remark at the end of section 2.2, this proves that if $\mf R_{\mf
L,\mf M}$ exists, then its local representative is $R_{L,M}$ and hence
the uniqueness of $t\mapsto (q_t,\dot q_t,R_t) \in M\times \cD^\circ$, as in
item 1. 

Consider now the equation 
$$
  A\ddot q + \ell =  S^T(SA^{-1}S^T)^{-1} (SA^{-1} \ell - \s) 
$$
in $\bR n$. Let $t\mapsto q_t$ be its unique solution with initial datum
$(q_0,\dot q_0)\in M$. Define $t\mapsto \lambda_t$ as in
(\ref{lambda}). With this choice of $\lambda_t$, the curve
$t\mapsto q_t$ satisfies $S\ddot q_t+\s(q_t,\dot q_t)=0$, or 
$\frac d {dt}[S(q_t)\dot q_t+s(q_t)]=0$. Hence $(q_t,\dot q_t)\in M$
for all $t$ and satisfies equation (\ref{EqLagr}) with
$R_t=S(q_t)^T\lambda_t\in \cD^\circ_{q_t}$. By d'Alembert principle,
such an $R_t$ is the representative of a reaction force that the
nonholonomic constraint can exert. 

This proves a local version of items 1. and 2., which
hold true within each coordinate chart. 
 
We now globalize these results. To this end we verify that the local
representatives of equation (\ref{EqLagr}), with reaction forces that
have the expression (\ref{eq:RF1}), 
match in the intersection of different chart domains. Let $\tilde q\mapsto
q=\cC(\tilde q)$ be a change of coordinates in $Q$. The local
representatives of the affine subbundle $M$ in the new coordinates is still
given by an equation of the form $\tilde S(\tilde q)\dot{\tilde q} +
\tilde s(\tilde q)=0$ and a simple computation shows that
$$
  \tilde S = \tilde P\,[S\circ\cC]\, \cC' \,,\qquad 
  \tilde s = \tilde P\, [s\circ\cC] 
$$
where $\cC'$ is the Jacobian matrix of $\cC$ and $\tilde P=\tilde
P(\tilde q)$ is a $k\times k$ nonsingular matrix. The local
representatives $\tilde L$ of the Lagrangian in the new coordinates
is $\tilde L(\tilde q,\dot{\tilde q}) = L(\cC(\tilde q),\cC'(\tilde
q)\dot{\tilde q})$ and the matrix
$
  \tilde A 
  := 
  \dder {\tilde L}{\dot{\tilde q}}{\dot{\tilde q}}
$ is given by 
$$
  \tilde A  = {\cC'}^T\,[A\circ \cC]\, \cC ' \,.
$$
In order to prove the statement it suffices to show that, if
$t\mapsto q_t$ is a solution of
$$
    \frac d{dt} \der {L} {\dot q} - \der{L}q 
    =
    S^T(SA^{-1}S^T)^{-1} (SA^{-1} \ell - \s) \,, 
$$
then $t\mapsto \tilde q_t=\cC^{-1}(q_t)$ is a solution of 
$$
    \frac d{dt} \der {\tilde L} {\dot{\tilde q}} -
    \der{L}{\tilde q} 
    =
    \tilde S^T(\tilde S \tilde A^{-1} \tilde S^T)^{-1}
    (\tilde S \tilde A^{-1} \tilde \ell -  \tilde \s) 
$$
with $\tilde \ell$ and $\tilde \s$ defined as $\ell$ and $\s$ in
(\ref{eq:RF2}), but in terms
of $\tilde L$, $\tilde S$ and $\tilde s$. It is well known from 
Lagrangian mechanics that 
$$
    \frac d{dt} \der {\tilde L} {\dot{\tilde q}}
    - \der{\tilde L}{\tilde q} 
    =
   \cC '^T  
   \Big[
   \Big( \frac d{dt} \der {L} {\dot q} - \der{L}q \Big)\circ\cC
   \Big] \,.
$$
Elementary computations show that 
\begin{equation}
\begin{aligned}
  &\tilde \ell_i 
  = 
  ({\cC'}^T \ell)_i + 
  ({\cC'}^T A)_{ij} [\dot{\tilde q}\cdot \cC''_j\dot{\tilde q} ] 
  \\
  &\tilde \sigma_a 
  = 
  (\tilde P\sigma)_a + 
  (\tilde P S)_{aj} [\dot{\tilde q}\cdot \cC''_j\dot{\tilde q} ]
\end{aligned}
\end{equation}
where $\cC''_j$ is the Hessian matrix of the $j$-th component $\cC_j$
of $\cC$ and 
with the convention, used below as well, that $\ell$, $A$, $\sigma$
and $S$ are composed with $\cC$. Since
$\tilde S\tilde A^{-1}=\tilde P SA^{-1}{\cC'}^{-T}$, this implies
$
  \tilde S\tilde A^{-1}\tilde \ell - \tilde \s 
  =
  \tilde P[SA^{-1}\ell - \s ]
$
so that
$$
   \tilde S^T\big(\tilde S\tilde A^{-1}\tilde S^T\big)^{-1}
   \big[ \tilde S \tilde A^{-1} \tilde \ell - \tilde \s \big]
   =
   {\cC'}^T 
   \Big[ \big(S^T\big(SA^{-1}S^T\big)^{-1} \big[  SA^{-1} \ell + \s
           \big]\big) \circ \cC\Big] \,. 
$$
This completes the proof. \end{proof}


{\small

}


\begin{thebibliography}{99}

\bibitem{abraham-marsden}
R. Abraham and J.~E. Marsden, {\sl Foundations of Mechanics}
(Benjamin, Reading 1978).


\bibitem{ago}
\newblock{C. Agostinelli,} 
\newblock{\em Nuova forma sintetica delle equazioni del moto di un
sistema anolonomo ed esistenza di un integrale lineare nelle
velocit\`a.} 
\newblock{Boll. Un. Mat. Ital. {\bf 11} (1956), 1-9.}

\bibitem{arnold-mmmc} 
\newblock{V.I. Arnold,}
\newblock{\em Mathematical Methods of Classical Mechanics.}
\newblock{Graduate Texts in Mathematics {\bf 60}}
\newblock{ (Springer-Verlag, New York, 1989).}

\bibitem{BGM} 
\newblock{L. Bates, H. Graumann and C. MacDonnell,}
\newblock{\em Examples of gauge conservation laws in nonholonomic systems.}
\newblock{Rep. Math. Phys. {\bf 37} (1996), 295--308.}

\bibitem{bates-jim} 
\newblock{L.M. Bates and J.M Nester,}
\newblock{\em On D'Alembert's principle.} 
\newblock{Commun. Math. 19 (2011), 57-72.} 

\bibitem{bates-sniatycki} 
\newblock{L. Bates and J. \'Sniatycki,}
\newblock{\em Nonholonomic reduction.}
\newblock{Rep. Math. Phys. {\bf32} (1993), 99-115.}

\bibitem{benenti}
S. Benenti, {\em A `user-friendly' approach to the dynamical equations
of non-holonomic systems.} SIGMA Symmetry Integrability Geom. Methods
Appl. {\bf 3} (2007), Paper 036, 33 pp. 

\bibitem{BKMM}
\newblock{A.M. Bloch, P.S. Krishnaprasad, J.E. Marsden and R.M. Murray,}
\newblock{\em Nonholonomic mechanical systems with symmetry.} 
\newblock{Arch. Rational Mech. Anal. {\bf 136} (1996), 21-99. }

\bibitem{BMK2002}
\newblock{A.V. Borisov, I.S. Mamaev and A.A. Kilin,}
\newblock{\em The rolling motion of a ball on a surface. New integrals and hierarchy of dynamics.} 
\newblock{Regul. Chaotic Dyn. {\bf 7} (2002), 201-219.}

\bibitem{BM2008}
\newblock{A.V. Borisov and I.S. Mamaev,}
\newblock{\em Conservation Laws, Hierarchy of Dynamics and Explicit Integration of Nonholonomic Systems.} 
\newblock{Regul. Chaotic Dyn. {\bf 13} (2008), 443-490.}

\bibitem{spagnoli}
\newblock{F. Cantrijn, M. de Leon, D. Martin de Diego and J.C. Marrero,}
\newblock{\em Reduction of nonholonomic mechanical systems with symmetries.} 
\newblock{Rep. Math. Phys. {\bf 42} (1998), 25-45.}

\bibitem{cortes}
\newblock{J. Cort\'es Monforte,} 
\newblock{\em  Geometric, Control and Numerical Aspects of Nonholonomic Systems.} 
\newblock{Lecture Notes in Math. {\bf 1793}}
\newblock{(Springer-Verlag, Berlin, 2002).}

\bibitem{crampin}
M. Crampin and T. Mestdag, 
{\em The Cartan form for constrained Lagrangian systems and the
nonholonomic Noether theorem.}
Int. J. Geom. Methods. Mod. Phys. {\bf 8} (2011), 897-923. 

\bibitem{CDS} 
\newblock{R. Cushman, J.J. Duistermaat and J. \'Sniatycki, }
\newblock{\em Geometry of Nonholonomically Constrained Systems.} 
\newblock{ Advanced Series in Nonlinear Dynamics {\bf 26}}
\newblock{(World Scientific, Singapore, 2010)}

\bibitem{FGS2008}
\newblock{F. Fass\`o, A. Giacobbe and N. Sansonetto,}
\newblock{\em Gauge conservation laws and the momentum equation in
nonholonomics mechanics.}
\newblock{Rep. Math. Phys. {\bf 62} (2008), 345-367.}

\bibitem{FGS2009}
\newblock{F. Fass\`o, A. Giacobbe and N. Sansonetto,}
\newblock{\em On the number of weakly Noetherian constants of motion of nonholonomic systems.}
\newblock{J. Geom. Mech. {\bf 1} (2009), 389-416.}

\bibitem{FGS2012}
\newblock{F. Fass\`o, A. Giacobbe and N. Sansonetto,}
\newblock{\em Linear weakly Noetherian constants of motion are horizontal
gauge momenta.}
\newblock{J. Geom. Mech. {\bf 4}, 2 (2012) 129-136.}

\bibitem{FRS}
\newblock{F. Fass\`o, A. Ramos and N. Sansonetto,}
\newblock{\em The reaction-annihilator distribution and the nonholonomic Noether theorem for lifted actions.} 
\newblock{Regul. Chaotic Dyn. {\bf 12} (2007), 449-458.}

\bibitem{FS2009}
\newblock{F. Fass\`o and N. Sansonetto,}
\newblock{\em An elemental overview of the nonholonomic Noether theorem.}
\newblock{ Int. J. Geom. Methods Mod. Phys. {\bf 6} (2009), 1343-1355.}

\bibitem{FS2015}
\newblock{F. Fass\`o and N. Sansonetto,}
\newblock{\em Conservation of `moving' energy in nonholonomic systems
with affine constraints and integrability of spheres on rotating
surfaces.} Preprint (2015). http://arxiv.org/abs/1503.06661

\bibitem{zampieri}
\newblock{G. Gorni and G. Zampieri, }
\newblock{\em Time reversibility and energy conservation for
Lagrangian systems with nonlinear nonholonomic constraints. }
\newblock{ Reports on Mathematical Physics {\bf 45} (2000), 217-227.} 

\bibitem{jotz2}
M. Jotz and T. Ratiu, 
{\em Dirac structures, nonholonomic systems and reduction}. 
Rep. Math. Phys. {\bf 69} (2012), 5-56.

\bibitem{kobayashi-oliva2003}
\newblock{M.H. Kobayashi and W.O. Oliva,}
\newblock{\em A note on the conservation of energy and volume in the setting of nonholonomic mechanical systems.} 
\newblock{Qual. Theory Dyn. Syst. {\bf 5}, 4 (2004) 247-259.}

\bibitem{marle95}
\newblock{C.-M. Marle,}
\newblock{\em Reduction of constrained mechanical systems and stability of relative equilibria.} 
\newblock{Comm. Math. Phys. {\bf 174}, 2 (1995) 295-318}.

\bibitem{marle2003}
\newblock{C.-M. Marle,}
\newblock{\em On symmetries and constants of motion in Hamiltonian
systems with nonholonomic constraints.} 
\newblock{ In {\em
Classical and Quantum Integrability (Warsaw, 2001)},  223-242,
Banach Center Publ.~{\bf 59} (Polish Acad. Sci. Warsaw, 2003). }

\bibitem{pagani91} 
E. Massa and E. Pagani,
{\em Classical dynamics of nonholonomic systems: a geometric approach}.
Ann. Inst. H. Poincar\'e Phys. Th\'eor. {\bf 55} (1991), 511-544. 

\bibitem{NF}
\newblock{J.I. Neimark and N.A. Fufaev, }
\newblock{\em Dynamics of Nonholonomic Systems.} 
\newblock{Translations of Mathematical Monographs {\bf 33} (AMS,
Providence, 1972).}

\bibitem{pars}
\newblock{L.A. Pars,}
\newblock{\em A Treatise on Analytical Dynamics.} 
\newblock{(Heinemann, London 1965).}

\bibitem{routh}
\newblock{E.J. Routh,}
\newblock{\em Treatise on the Dynamics of a System of Rigid Bodies
(Advanced Part).}
\newblock{(Dover, New York, 1955).}

\bibitem{sniatycki98} 
\newblock{J. \'Sniatycki,}
\newblock{\em Nonholonomic Noether theorem and reduction of symmetries.}
\newblock{Rep. Math. Phys. {\bf 42} (1998), 5-23.}


\bibitem{terra-kobayashi}
\newblock{G. Terra and M.H. Kobayashi,}
\newblock{\em On Classical Mechanics with Nonlinear Constraints. } 
\newblock{J. Geom. Phys. {\bf 49}, 3-4 (2003) 385-417.}

\end{thebibliography}
\end{document}